\newtheorem{theorem}{Theorem}
\newtheorem{lemma}[theorem]{Lemma}
\newtheorem{remark}[theorem]{Remark}
\newtheorem{definition}[theorem]{Definition}
\newtheorem{algorithm}{Algorithm}
\begin{document}

\title{A Global Version of the Newton Method for Finding a Singularity of the Nonsmooth Vector Fields on Riemannian Manifolds}

\author{Fabiana R. de Oliveira \thanks{Instituto de Matem\'atica e Estat\'istica, Universidade Federal de Goi\'as,  CEP 74001-970 - Goi\^ania, GO, Brazil, E-mails: {\tt  rodriguesfabiana@ufg.br.com}, {\tt  fabriciarodrigues@ufg.br}. }
\and
Fabr\'icia R. Oliveira \footnotemark[1]
}

\maketitle
\begin{abstract}
This paper is concerned with an algorithm for finding a singularity of the nonsmooth vector fields. Firstly, we discuss the main results of the Newton method presented in \cite{deOliveiraFerreira2020} for solving the aforementioned problem. Combining this method with a nonmonotone line search strategy, we then propose a global version of the Newton Method. Finally, numerical experiments illustrating the practical advantages of the proposed scheme are reported.
\end{abstract}

\noindent
{\bf Keywords:} Riemannian manifold, locally Lipschitz continuous vector fields, global convergence, regularity,  nonmonotone line search.

\noindent
{\bf  AMS Subject Classification:} 49J52, 58C05, 58C15, 90C56.

\section{Introduction}\label{sec:int}
In this paper, we consider the problem of finding a singularity of the nonsmooth vector fields defined on Riemannian manifolds, by means of a global version of the Newton method. Although the interest in nonsmooth functions in the Riemannian setting has increased; see for example \cite{Azagra2005,Ferreira2008,HosseiniEtall2017,GrohsHossein2016,HosseiniHuangYousefpour2018,Hosseini2013,Hosseini2017,LedyaevYuZhu2007}, only a few studies exist on nonsmooth vector fields in this context; see \cite{HosseiniEtall2017,Rampazzo2007}. Recently, \cite{deOliveiraFerreira2020} proposed and analyzed a version of the Newton method for finding a singularity of a class of locally Lipschitz continuous vector field. For the smooth vector fields, much has already been done, see \cite{ArgyrosHilout2009,BittencourtFerreira2015,FernandesFerreiraYuan2017,OrizonSilva2012,FerreiraSvaiter2002,ChongWang2005,Wang2011}. In, \cite{BortoliFernandesFerreiraYuan2020} was proposed a global version of the Newton method, called damped Newton method, for finding a singularity of smooth vector fields. In particular, the basic idea of this method is to use a linear search when the full step does not provide a sufficient decrease for values of the chosen merit function. Owing to the aforementioned facts, we believe that the development of news schemes for nonsmooth vector fields might be of significant interest.

Newton method is very popular by their fast local convergence, however, it is very sensitive with respect to the initial iterate and may diverge if it is not sufficiently close to the solution. To bypass this drawback, some strategies have been incorporated on the Newton method, for example, BFGS, Levenberg--Marquardt and Trust Region; see \cite{BehlingFischerHerrich2014,Bertsekas2014,Dennis1996,Fan2013,NoceWrig06}. Another strategy of particular interest is the one by using a nonmonotone linear search together with a merit function, see \cite{Grippo1986,IannazzoPorcelli2018}. It is worth pointing out that the nonmonotone strategies have been shown more efficient than monotone ones owing to the fact that enforcing the monotonicity of the function values may make the method to converge slower.

The goal of this paper is to present a global version of the Newton method for finding a singularity of nonsmooth vector fields. Basically, we combine our first algorithm presenting in \cite{deOliveiraFerreira2020} with the nonmonotone line search strategy. We show that any accumulation point of the iterative sequence is a stationary point of the chosen merit function. To the convergence rate, we ensure that the sequence generated by the proposed method reduces to a sequence generated by the Newton method after a finite number of iterations. Moreover, to assess the practical behavior of the new scheme, some numerical experiments are reported. In particular, we present a scenario in which the global version becomes interesting in practice.

This paper is organized as follows. In Section~\ref{sec:int.1}, some notations and basic results are presented. In Section~\ref{sec:na}, we discuss some main results of  nonsmooth analysis and of the Newton method to the Riemannian context. In Section~\ref{sec:nmca}, we describe a global version of the Newton method and establish its convergence theorems. In Section~\ref{sec:numexp}, we present some numerical experiments of the proposed scheme. Finally, some concluding remarks are given in Section~\ref{sec:fr}.

\section{Notations and Definition} \label{sec:int.1}
In this section, we recall some notations, definitions, and basic properties used herein, see, for example, \cite{Lang1995,Sakai1996,Loring2011}. Let $\mathbb{M}$ be an $n$-dimensional smooth Riemannian manifold with {\it Riemannian metric} denoted by  $\langle  \cdot,   \cdot \rangle$ and the corresponding {\it norm}  by $\|\cdot\|$.  The {\it length} of a piecewise smooth curve  $\gamma:[a,b]\rightarrow \mathbb{M}$ joining $p$ to $q$ in $\mathbb{M}$, i.e., $\gamma(a)= p$ and $\gamma(b)=q$, is denoted by  $\ell(\gamma)$.  The {\it Riemannian distance} between $p$ and $q$ is defined as $d(p,q) = \inf_{\gamma \in \Gamma_{p,q}} \ell(\gamma)$,  where $\Gamma_{p,q}$ denotes the set of all the piecewise smooth curves in $\mathbb{M}$ joining points $p$ and $q$. This distance induces the original topology on $\mathbb{M}$; i.e., $(\mathbb{M}, d)$ is a complete metric space and the bounded and closed subsets are compact. The {\it open ball} of radius $\delta > 0$, centered at $p$ is defined by $B_{\delta}(p):= \{ q\in \mathbb{M}:~ d(p,q) < \delta\}$. The {\it tangent space} at point $p$ is denoted by $T_p\mathbb{M}$,   the {\it tangent bundle}  by $T\mathbb{M} := \bigcup_{p\in \mathbb{M}}T_p\mathbb{M}$,  and a {\it vector field} by a mapping   $X: \mathbb{M} \to T\mathbb{M}$ such that $X(p) \in T_p\mathbb{M}$.  Let $\gamma$ be a curve joining points $p$ and $q$ in $\mathbb{M}$, and let $\nabla$ be the Levi--Civita connection associated to $(\mathbb{M}, \langle \cdot, \cdot \rangle)$. For each $t \in [a,b]$, $\nabla$ induces a linear isometry between the tangent spaces $T _{\gamma(a)} {\mathbb{M}}$ and $T _{\gamma(t)} {\mathbb{M}}$, relative to $\langle \cdot , \cdot \rangle$, defined by $P_{\gamma,a,t}v = Y(t)$, where $Y$ denotes the unique vector field on $\gamma$ such that $\nabla_{\gamma'(t)}Y(t) = 0$ and $Y(a)= v$. The aforementioned isometry is called {\it parallel transport} along the  segment $\gamma$ joining  $\gamma(a)$ to $\gamma(t)$. It can  be showed   that
$P_{\gamma,\,b,\,t}\circ P_{\gamma,\,a,\,b}= P_{\gamma,\,a,\,t}$ and $P_{\gamma,\,t,\,a}=P^{-1}_{\gamma,\,a,\,t}, $ for all $a\leq b\leq t$.
For simplicity and convenience, whenever there is no confusion, we consider the notation $P_{\gamma,p,q}$  instead of $P_{\gamma,\,a,\,b}$,  where $\gamma$ denotes a segment joining $p$ to $q$,  with $\gamma(a)=p$ and $\gamma(b)=q$.  We use the short  notation $P_{pq}$  instead of  $P_{\gamma,p,q}$ whenever  there exists  a  unique geodesic   segment joining $p$ to $q$.  Let $f: \mathbb{M} \rightarrow \mathbb{R}$ be a smooth function, the \textit{Riemannian gradient} $f'(p)$ of $f$ at $p\in \mathbb{M}$ is defined as the unique element in $T_p\mathbb{M}$ such that
\begin{equation}\label{eq:DefGra}
f'(p)^T\xi_p = Df(p)[\xi_p], \qquad \forall ~ \xi_p \in T_p\mathbb{M},
\end{equation}
where $Df(p): T_p\mathbb{M} \rightarrow T_p\mathbb{M}$ is the differential of $f$ at $p$. A vector field $Y$ along the smooth curve $\gamma$ in $\mathbb{M}$ is {\it parallel}  when $\nabla_{\gamma^{\prime}} Y=0$. If $\gamma^{\prime}$ is parallel, we say that $\gamma$ is a {\it geodesic}. Because the geodesic equation $\nabla_{\gamma'}\gamma' = 0$ is a second-order nonlinear ordinary differential equation, the geodesic $\gamma$ is determined using its position $p$ and velocity $v$ at $p$. It is easy to check that $\left\| \gamma' \right\|$ is constant. The restriction of a geodesic to a  closed bounded interval is called a {\it geodesic segment}. A geodesic segment  joining $p$ to $q$ in $\mathbb{M}$ is {\it minimal} if its length is equal to $d(p,q)$, and, in this case, it will be denoted by $\gamma_{pq}$. A Riemannian manifold is {\it complete} if its geodesics $\gamma(t)$ are defined for any value of $t\in \mathbb{R}$. The Hopf--Rinow theorem asserts that any pair of points in a complete Riemannian  manifold $\mathbb{M}$ can be joined by a (not necessarily unique) minimal geodesic segment. Hereinafter, {\it $\mathbb{M}$ denotes  an $n$-dimensional smooth and complete Riemannian manifold}. Because of the completeness of the Riemannian manifold $\mathbb{M}$, the {\it exponential map} at $p$, $\exp_{p}:T_{p}\mathbb{M} \to \mathbb{M} $ can be given by $\exp_{p}v = \gamma(1)$, where $\gamma$ denotes the geodesic defined by its position $p$ and velocity $v$ at $p$, and $\gamma(t) = \exp_p(tv)$ for any value of $t$. The inverse of the exponential map (if exists) is denoted by $\exp^{-1}_{p}$. Let $p\in \mathbb{M}$, the {\it injectivity radius} of $\mathbb{M}$ at $p$ is defined by $ r_{p}:=\sup\{r > 0:~{\exp_{p}}_{\lvert_{B_{r}(0_{p})}} \mbox{ is\, a\, diffeomorphism} \}$, where $0_{p}$ denotes the origin of the $T_{p}\mathbb{M}$, and $B_{r}(0_{p}):= \{v\in T_{p}\mathbb{M}:~\| v-0_{p}\| <r\}$.
\begin{remark}
For $\bar{p}\in \mathbb{M}$, the above definition implies that if $0<\delta<r_{\bar{p}}$, then $\exp_{\bar{p}}B_{\delta}(0_{ \bar{p}}) = B_{\delta}( \bar{p})$. Therefore, for all $p, q\in B_{\delta}(\bar{p})$, there exists a unique geodesic segment $\gamma$ joining  $p$ to $q$, given by $\gamma_{p q}(t)=\exp_{p}(t \exp^{-1}_{p} {q})$ for all $t\in [0, 1]$ and $d(p,q) = \|\exp_p^{-1}q\|$.
\end{remark}

Next, we present a quantity that plays  an important role in the sequel;  it  was defined in \cite{Dedieu2003}.
\begin{definition} \label{def:kp}
Let $p \in \mathbb{M}$ and $r_{p}$ be the radius of injectivity of $\mathbb{M}$ at $p$. We define the quantity as follows:
$$
K_{p}:=\sup\left \{\dfrac{d(\exp_{q}u, \exp_{q}v)}{\left\| u-v\right\|} :~ q\in B_{r_{p}}(p), ~ u,\,v\in T_{q}\mathbb{M}, ~u\neq v, ~\| v\| \leq r_{p},
\| u-v\|\leq r_{p}\right\}.
$$
\end{definition}

In the following remark, we show that  an estimative for the value of  $K_p$ can be found for Riemannian manifolds with  non-negative sectional curvature.
\begin{remark}
The number $K_{p}$ measures how fast the geodesics spread apart in $\mathbb{M}$. Particularly, when $u =~0$ or, more generally, when $u$ and $v$ are on the same line through $0$, then $d(\exp_{q}u, \exp_{q}v)=\| u-v\|$. Therefore, $K_{p}\geq1$ for all $p\in \mathbb{M}$. When $\mathbb{M}$ has non-negative sectional curvature, the geodesics spread apart less than the rays \cite[Chapter 5]{doCarmo1992}, i.e., $d(\exp_{p}u, \exp_{p}v)\leq\|u-v\|$; in this case, $K_{p}=1$  for all $p\in \mathbb{M}$.
\end{remark}
\begin{definition}
The {\it directional derivative} of   $X$ at $p$ along the direction $v\in T_p\mathbb{M}$ is defined by
\begin{align*}\label{derdir}
\nabla X(p, v) := \lim_{t \downarrow 0}\dfrac{1}{t}\left[P_{\exp_{p}(tv) p}X(\exp_{p}(tv)) - X(p)\right] \in T_p\mathbb{M},
\end{align*}
whenever  the limit exists, where $P_{\exp_{p}(tv) p}$  denotes the parallel transport along $\gamma(t)= \exp_{p}(tv)$.
\end{definition}

In particular, if the directional derivative exists for every $v$, the vector field $X$ is \textit{directionally differentiable} at $p$. We end this section with two  definitions know, namely norm of a linear mapping and descent direction for functions on Riemannian manifolds.
\begin{definition}\label{de:normmult}
Let $p \in \mathbb{M}$. The  norm of a linear mapping $A: T_p\mathbb{M} \to T_p\mathbb{M}$  is defined by
$$
\|A\|:=\sup \left\{ \|A v  \|:~  v \in T_p\mathbb{M}, ~\|v\| = 1 \right\}.
$$
\end{definition}
\begin{definition}
Let $f: \mathbb{M} \rightarrow \mathbb{R}$ be a continuously differentiable function in a neighborhood of $p \in \mathbb{M}$. A vector $v \in T_p\mathbb{M}$, is called a descent direction for $f$ at $p$ if satisfies $f'(p)^{T} v < 0$, where $f'(p)^{T}$ is the transpose of the gradient of $f$ at $p$.
\end{definition}

\section{Preliminary Results} \label{sec:na}
Here, we discuss on Riemannian settings the main results of nonsmooth analysis studied in \cite{deOliveiraFerreira2020}. We begin by presenting the concept of locally Lipschitz continuous vector fields. This concept was introduced in \cite{CruzNetoLimaOliveira1998} for gradient vector fields, and its extension to general vector fields can be found in \cite[p.~241]{CanaryEpsteinMarden2006}.
\begin{definition} \label{def:ILC}
A vector field $X:\Omega\subseteq \mathbb{M} \rightarrow T_p\mathbb{M}$ is regarded as locally Lipschitz continuous if for each $\bar{p} \in \Omega$ there exist constants $L, \delta > 0$, such that $\|P_{\gamma, p, q}X(p) - X(q)\| \leq  L \,\ell(\gamma)$ for all $p, q  \in B_{\delta}(\bar{p})$ and all geodesic segment $\gamma$ joining $p$ to $q$.
\end{definition}

\begin{remark}
According to the Rademacher theorem, see \cite[Theorem 3.2]{deOliveiraFerreira2020}, locally Lipschitz continuous vector fields are everywhere differentiable.
\end{remark}

Next, we define the {\it Clarke--generalized covariant derivative} of a vector field, which has appeared in \cite{deOliveiraFerreira2020}. This derivative requires only the local Lipschitz continuity of the vector field $X$ and its well-definedness is ensured by Rademacher theorem.
\begin{definition}\label{def:general}
The Clarke--generalized covariant derivative of a locally Lipschitz continuous vector field $X$ is a set-valued mapping $\partial X: \mathbb{M}  \rightrightarrows T\mathbb{M}$ defined as
$$
\partial X(p) :=  \mbox{conv} \left\{ H\in {\mathcal{L}}(T_p\mathbb{M}):~ \exists\, \{p_k\}\subset {\cal D}_X,~ \lim_{k\to \infty}p_k =p,\, H = \lim_{k\rightarrow \infty}P_{p_kp}\nabla X(p_k) \right\},
$$
where $\mbox{``conv"}$ represents the convex hull, ${\mathcal{L}}(T_p\mathbb{M})$ the vector space that comprises all the linear operators from $T_p\mathbb{M}$ to $T_p\mathbb{M}$, and ${\cal D}_X$ the set of points at which $X$ is differentiable.
\end{definition}
\begin{remark}
According Definition \ref{def:general} and \cite[Corollary 3.1]{FernandesFerreiraYuan2017}, it is evident that if $X$ is differentiable near $p$, and if its covariant derivative is continuous at $p$, then $\partial X(p) = \{\nabla X(p)\}$. Otherwise, $\partial X(p)$ could contain other elements that are different from $\nabla X(p)$, even if $X$ is differentiable at $p$ (see \cite[Example 2.2.3]{Clarke1990}). In \cite[Proposition 3.1]{deOliveiraFerreira2020} were established important results for the Clarke--generalized covariant derivative. The results established there that will be useful in our study are: $(i)$ no vacuity of the set $\partial X(p)$ for all $p\in \mathbb{M}$ and $(ii)$ local limitation for set-valued mapping $\partial X: \mathbb{M}\rightrightarrows T\mathbb{M}$, i.e., for all $\delta > 0$ and any $p\in \mathbb{M}$, there exists a $L > 0$, such that $\|V\|\leq L$ for all $q \in B_{\delta}(p)$ and all $V \in \partial X(q)$.
\end{remark}

The next definition is important for the discussions hereinafter, as it presents an important condition for ensuring the well-definedness of the sequence generated by the Newton methods even when there is no differentiability.
\begin{definition}\label{def:reg}
We say that a vector field $X$  on $\mathbb{M}$ is regular at $p \in \mathbb{M}$ if all $V_p \in \partial X(p)$ are non-singular.  If $X$ is regular at every point of $\Omega \subseteq \mathbb{M}$, we say that $X$ is regular on $\Omega$.
\end{definition}

Next, we establish for the locally Lipschitz  continuous vector fields that, if $V_{\bar{p}}$ is non-singular there exists a neighborhood of $\bar{p} \in \mathbb{M}$ where $V_p$ is non-singular. The proof is analogous to \cite[Lemma 4.2]{deOliveiraFerreira2020}.
\begin{lemma}\label{le:NonSing}
Let $X$ be a locally Lipschitz  continuous vector field on $\mathbb{M}$. Assume that $X$  is   regular  at $\bar{p} \in \mathbb{M}$ and let $\lambda_{\bar{p}}\geq \max\{\|{V_{\bar{p}}^{-1}}\|: ~ {V_{\bar{p}}}\in \partial X({\bar{p}})\}$. Then, for every    $\epsilon>0$  satisfying  $\epsilon \lambda_{\bar{p}} <1$, there exists $0 < \delta < r_{\bar{p}}$  such that $X$ is regular on $ B_{\delta}(\bar{p})$ and
\begin{equation*} \label{eq:BanachLem}
\|V_p^{-1}\| \leq  \frac{\lambda_{\bar{p}}}{1 -\epsilon \lambda_{\bar{p}}}, \qquad  \forall~p\in   B_{\delta}({\bar{p}}), \quad \forall ~V_p\in \partial X(p).
\end{equation*}
\end{lemma}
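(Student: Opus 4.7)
The plan is to adapt the classical Banach perturbation lemma to the present nonsmooth Riemannian setting, using parallel transport to pull every candidate $V_p \in \partial X(p)$ back to an operator on the fixed tangent space $T_{\bar p}\mathbb{M}$ and comparing it there with some $V_{\bar p} \in \partial X(\bar p)$. Concretely, for $p$ sufficiently close to $\bar p$ (in particular inside the injectivity radius, so the geodesic $\gamma_{\bar p p}$ is unique and $P_{p\bar p}$ is well defined) and $V_p \in \partial X(p)$, define the pulled-back operator
$$
\widetilde{V}_p := P_{p\bar p}\, V_p\, P_{\bar p p}\in \mathcal{L}(T_{\bar p}\mathbb{M}).
$$
Since $P_{\bar p p}$ and $P_{p\bar p}$ are linear isometries, $V_p$ is non-singular iff $\widetilde{V}_p$ is, with $\|V_p^{-1}\| = \|\widetilde{V}_p^{-1}\|$, so it suffices to prove the bound for $\widetilde{V}_p$.

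The first key step is an upper-semicontinuity property of $\partial X$ at $\bar p$: given any $\epsilon>0$, there exists $\delta_1 \in (0,r_{\bar p})$ such that for every $p \in B_{\delta_1}(\bar p)$ and every $V_p \in \partial X(p)$ one can choose $V_{\bar p}\in \partial X(\bar p)$ with $\|\widetilde{V}_p - V_{\bar p}\| < \epsilon$. I would prove this by contradiction, assuming the existence of sequences $p_n \to \bar p$ and $V_{p_n}\in \partial X(p_n)$ whose pullbacks $\widetilde{V}_{p_n}$ stay at distance at least $\epsilon_0$ from $\partial X(\bar p)$. Local boundedness of $\partial X$ (stated in the remark after Definition \ref{def:general}) makes $\{\widetilde{V}_{p_n}\}$ bounded, so along a subsequence it converges to some $V^\ast \in \mathcal{L}(T_{\bar p}\mathbb{M})$. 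Writing each $V_{p_n}$ as a convex combination of limits of $P_{q_k p_n}\nabla X(q_k)$ with $q_k \to p_n$, a standard diagonal extraction, together with the compatibility of parallel transports ($P_{p_n\bar p}\circ P_{q_k p_n} = P_{q_k\bar p}$ up to higher-order terms along nearby geodesics), produces a sequence $\{q_k'\}\subset \mathcal D_X$ with $q_k'\to \bar p$ realizing $V^\ast$ as an element of $\partial X(\bar p)$, contradicting the separation hypothesis. This upper-semicontinuity step is the main obstacle, since it demands a careful diagonalization compatible with parallel transport.

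Once this is in hand, the rest is a routine Banach perturbation argument. Fix $\epsilon>0$ with $\epsilon\lambda_{\bar p}<1$ and choose $\delta\in(0,\delta_1)$ as above. For $p\in B_\delta(\bar p)$ and $V_p\in \partial X(p)$, pick $V_{\bar p}\in \partial X(\bar p)$ with $\|\widetilde{V}_p - V_{\bar p}\|<\epsilon$ and factor
$$
\widetilde{V}_p = V_{\bar p}\bigl(I - V_{\bar p}^{-1}(V_{\bar p} - \widetilde{V}_p)\bigr).
$$
Because $\|V_{\bar p}^{-1}\|\leq \lambda_{\bar p}$, the operator $V_{\bar p}^{-1}(V_{\bar p}-\widetilde{V}_p)$ has norm less than $\epsilon\lambda_{\bar p}<1$, so by the Banach lemma the factor in parentheses is invertible with inverse norm at most $1/(1-\epsilon\lambda_{\bar p})$. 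Therefore $\widetilde{V}_p$, and hence $V_p$, is invertible with
$$
\|V_p^{-1}\| = \|\widetilde{V}_p^{-1}\| \leq \frac{\lambda_{\bar p}}{1-\epsilon\lambda_{\bar p}},
$$
establishing both the regularity of $X$ on $B_\delta(\bar p)$ and the claimed norm estimate. The argument mirrors the one in \cite[Lemma 4.2]{deOliveiraFerreira2020}, the only genuinely Riemannian ingredient being the bookkeeping of parallel transports in the upper-semicontinuity step.
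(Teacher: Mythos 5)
Your proposal is correct and follows essentially the same route as the paper, which does not spell the argument out but defers to \cite[Lemma 4.2]{deOliveiraFerreira2020}: there, too, one pulls each $V_p$ back to $T_{\bar p}\mathbb{M}$ by parallel transport, invokes the upper semicontinuity (and local boundedness) of $\partial X$ established in \cite[Proposition 3.1]{deOliveiraFerreira2020}, and concludes with the Banach perturbation lemma exactly as you do. The only difference is that you re-derive the upper-semicontinuity step (sketched via Carath\'eodory plus a diagonal extraction and the shrinking-triangle behavior of composed parallel transports) instead of citing it, which is fine since that sketch matches the standard proof.
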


As already mentioned, in this paper, we propose and investigate a global version of the Newton method for finding a singularity of a vector field $X$ on $\mathbb{M}$, i.e., to solve the following problem
\begin{equation} \label{eq:TheProblem}
\mbox {find}  \quad p\in \mathbb{M}\quad  \mbox{such that} \quad X(p)=0,
\end{equation}
where  $X$ denotes  a locally Lipschitz  continuous vector field on $\mathbb{M}$. In \cite{deOliveiraFerreira2020} was propose a version of Newton method (NM) for solving the problem~\eqref{eq:TheProblem}. The algorithm is described formally in the sequence.\\
\hrule
\begin{algorithm}  \label{Alg:NNM}
{\vspace{0.2cm}\bf Newton method \vspace{0.3cm}}
\hrule
\vspace{0.1cm}
\begin{description}
\vspace{.5 cm}
\item[\bf Step 0.] Let $p_0\in \mathbb{M}$ be given, and set $k=0$.
\item[\bf Step 1.] If $X(p_k) = 0$, then \textbf{stop}.
\item[\bf Step 2.] Choose a $V_k := V_{p_k}\in \partial X(p_k)$ and compute $p_{k+1}=\exp_{p_{k}}(-V_{k}^{-1}X(p_{k}))$.
\item[\bf Step 3.] Set $k\gets k+1$, and go to \textbf{Step~1}.
\vspace{.5 cm}
\end{description}
\hrule
\end{algorithm}
\vspace{0.3cm}
\noindent

The local convergence analysis of the Newton method described by Algorithm~\ref{Alg:NNM} was made under the following assumptions for the locally Lipschitz continuous vector field $X$.
\begin{itemize}
\item[{\bf A1.}]  Let  ${\bar p}\in \mathbb{M}$, $0<\delta<  r_{\bar p}$ and $X$ be regular on $B_{\delta}({\bar p})$. Consider $\lambda_{\bar p}\geq \max\{\|{V_{\bar p}^{-1}}\|: ~ { V_{\bar p}}\in \partial X({\bar p})\}$  and   $\epsilon >0$ satisfy $\epsilon \lambda_{\bar p}<1$. For all $p \in   B_{\delta}({\bar p})$ and all $V_{p}\in \partial X(p)$ there hold
\begin{eqnarray}
\displaystyle \|V_{p}^{-1}\| &\leq&  \frac{\lambda_{\bar p}}{1 -\epsilon \lambda_{\bar p}},   \label{eq:fcA1}    \\
\displaystyle \left\|X({\bar p})-P_{p{\bar p}}\left[X(p)+ V_{p}\exp^{-1}_{p} {\bar p}\right]\right\|&\leq& \epsilon \,d(p, {\bar p})^{1+\mu}, \qquad 0\leq \mu \leq 1. \label{eq:scA1}
\end{eqnarray}
\end{itemize}

It is worth mentioning that the semismooth and $\mu$-order semismooth vector fields for $0<\mu \leq 1$  satisfy inequalities~\eqref{eq:fcA1} and \eqref{eq:scA1}, see \cite{deOliveiraFerreira2020}.
\begin{definition}
Let  $0< \delta< r_{\bar p}$ be given by above assumption. The {\it Newton iteration  mapping} $N_{X} \colon B_{\delta}({\bar p}) \rightrightarrows \mathbb{M}$ for $X$ is defined by $N_{X} (p):= \{\exp_{p}(-V^{-1}_pX(p)): ~ V_p \in \partial X(p)\}$.
\end{definition}

In the following, we present a result about the behavior of the Newton iteration  mapping near a singularity of the vector field $X$, whose proof can be found in \cite[Lemma 4.1]{deOliveiraFerreira2020}.
\begin{lemma}\label{le:welldefined}
Suppose  that $p_* \in \mathbb{M}$ is  a solution of problem~\eqref{eq:TheProblem}, $X$  satisfies {\bf A1}  with ${\bar p} = p_*$ and the constants  $\epsilon>0$, $0<\delta<  r_{p_*}$ and $0\leq \mu \leq 1$  satisfy  $\epsilon \lambda_{p_*}(1+ \delta^{\mu}K_{p_*}) <1$.  Then, there exists   ${\hat \delta}> 0$ such that $X$ is regular on $ B_{{\hat \delta}}(p_*)$ and
\begin{equation*} \label{eq;csl}
d\left(\exp_{p}(-V_{p}^{-1}X(p)),p_{*}\right)\leq \frac{ \epsilon\lambda_{p_*}K_{p_*}}{1 - \epsilon \lambda_{p_*} }  d(p,p_{*})^{1+\mu},   \qquad \forall~p \in B_{\hat \delta}({p_*}),  \quad \forall ~V_{p}\in \partial X(p).
\end{equation*}
Consequently,  $N_{X}$ is well-defined on $B_{{\hat \delta}}(p_{*})$ and   $N_{X}(p) \subset   B_{{\hat \delta}}(p_{*})$   for all $p\in B_{\hat \delta}(p_{*})$.
\end{lemma}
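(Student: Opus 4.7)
The plan is to take the candidate Newton iterate $q := \exp_p(-V_p^{-1}X(p))$ for $p$ close to $p_*$ and compare it to $p_*$ using the exponential chart at $p$: writing $p_* = \exp_p(\exp_p^{-1}p_*)$ (which is legitimate once $p$ is within the injectivity radius of $p_*$), I can apply the spreading constant $K_{p_*}$ from Definition~\ref{def:kp} to bound
\[
d(q,p_*) \;=\; d\bigl(\exp_p(-V_p^{-1}X(p)),\,\exp_p(\exp_p^{-1}p_*)\bigr) \;\leq\; K_{p_*}\,\bigl\|\exp_p^{-1}p_* + V_p^{-1}X(p)\bigr\|.
\]
Factoring $V_p^{-1}$ out of the right-hand side turns this into $\|V_p^{-1}\|\cdot\|V_p\exp_p^{-1}p_* + X(p)\|$, which is exactly the quantity controlled by A1.

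Next, since $X(p_*)=0$ and the parallel transport $P_{pp_*}$ is an isometry,
\[
\bigl\|X(p)+V_p\exp_p^{-1}p_*\bigr\| \;=\; \bigl\|X(p_*)-P_{pp_*}\bigl[X(p)+V_p\exp_p^{-1}p_*\bigr]\bigr\|,
\]
so inequality~\eqref{eq:scA1} gives the upper bound $\epsilon\,d(p,p_*)^{1+\mu}$, while inequality~\eqref{eq:fcA1} gives $\|V_p^{-1}\|\leq \lambda_{p_*}/(1-\epsilon\lambda_{p_*})$. Chaining these produces exactly the claimed estimate
\[
d(q,p_*) \;\leq\; \frac{\epsilon\lambda_{p_*}K_{p_*}}{1-\epsilon\lambda_{p_*}}\,d(p,p_*)^{1+\mu}.
\]

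Finally, to choose $\hat\delta$, I would pick any $\hat\delta \in (0,\delta]$ small enough that (i) $B_{\hat\delta}(p_*)\subset B_{r_{p_*}}(p_*)$ so $\exp_p^{-1}p_*$ is defined and the vectors $u=-V_p^{-1}X(p)$, $v=\exp_p^{-1}p_*$ satisfy the norm constraints in Definition~\ref{def:kp}, and (ii) the hypothesis $\epsilon\lambda_{p_*}(1+\delta^{\mu}K_{p_*})<1$ forces the contraction factor $\epsilon\lambda_{p_*}K_{p_*}\hat\delta^{\mu}/(1-\epsilon\lambda_{p_*})<1$. Then the displayed inequality immediately yields $d(q,p_*)<\hat\delta$, hence $N_X(p)\subset B_{\hat\delta}(p_*)$; regularity of $X$ on $B_{\hat\delta}(p_*)$ (and invertibility of each $V_p$, so that the Newton step is actually defined) is inherited from assumption A1 together with Lemma~\ref{le:NonSing}.

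The main technical delicacy is housekeeping around the $K_{p_*}$ estimate: one must verify that the two tangent vectors to which the spreading bound is applied lie in the admissible set of Definition~\ref{def:kp}, i.e.\ that $\|v\|=d(p,p_*)$, $\|u\|=\|V_p^{-1}X(p)\|$ and $\|u-v\|$ can all be controlled by $r_{p_*}$. This is handled by shrinking $\hat\delta$ further if necessary, using the local boundedness of $\partial X$ recalled after Definition~\ref{def:general} together with $X(p_*)=0$ to keep $\|V_p^{-1}X(p)\|$ small. Once that bookkeeping is done, the remainder of the argument is purely algebraic manipulation of the two A1 inequalities.
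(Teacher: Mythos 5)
Your proposal is correct and follows essentially the same route as the proof the paper cites (Lemma 4.1 of \cite{deOliveiraFerreira2020}): compare $\exp_{p}(-V_{p}^{-1}X(p))$ with $p_{*}=\exp_{p}(\exp_{p}^{-1}p_{*})$ via the constant $K_{p_{*}}$, factor out $V_{p}^{-1}$, and apply the two inequalities of {\bf A1} together with $X(p_{*})=0$, choosing $\hat\delta\le\delta$ so that the hypothesis $\epsilon\lambda_{p_{*}}(1+\delta^{\mu}K_{p_{*}})<1$ makes the resulting factor a contraction. The only minor remark is that Definition~\ref{def:kp} places no constraint on $\|u\|$ itself, so the bookkeeping you flag reduces to checking $\|v\|=d(p,p_{*})<r_{p_{*}}$ and $\|u-v\|\le\|V_{p}^{-1}\|\,\epsilon\,d(p,p_{*})^{1+\mu}<d(p,p_{*})$, both of which hold automatically for any $\hat\delta\le\delta$ without further shrinking.
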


We end this section with a result establishing the convergence rate for a sequence generated by Algorithm~\ref{Alg:NNM}. Its proof is a direct application of Lemma~\ref{le:welldefined}, see \cite[Theorem 4.1]{deOliveiraFerreira2020}.
\begin{theorem}\label{th:conv}
Suppose   that  $p_* \in \mathbb{M}$ is  a solution of problem~\eqref{eq:TheProblem}, $X$  satisfies {\bf A1}  with ${\bar p} = p_*$,  and the constants  $\epsilon>0$, $0<\delta<  r_{p_*}$ and $0\leq \mu \leq 1$  satisfy  $\epsilon \lambda_{p_*}(1+ \delta^{\mu}K_{p_*}) <1$. Then, there exists   $0 < {\hat \delta} < \delta$ such that   for each  $p_{0}\in B_{\hat \delta}(p_{*})\backslash \{{p_*}\}$,    $\{p_{k}\}$ in Algorithm~\ref{Alg:NNM} is well defined, belongs to  $B_{\hat \delta}(p_{*})$, and converges to $p_{*}$ with order $1+\mu$ as follows:
\begin{equation*} \label{eq;csls}
d\left(p_{k+1},p_{*}\right)\leq \frac{ \epsilon\lambda_{p_*}K_{p_*}}{1 - \epsilon \lambda_{p_*}}  d(p_{k},p_{*})^{1+\mu},  \qquad k=0, 1, \ldots.
\end{equation*}
\end{theorem}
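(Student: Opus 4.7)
The plan is to deduce Theorem~\ref{th:conv} essentially as a corollary of Lemma~\ref{le:welldefined}, using induction to propagate both the well-definedness of $\{p_k\}$ and the containment in $B_{\hat\delta}(p_*)$, and then reading off the convergence rate from the one-step bound.

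First I would invoke Lemma~\ref{le:welldefined}: since $X$ satisfies \textbf{A1} at $\bar p = p_*$ with $\epsilon\lambda_{p_*}(1+\delta^{\mu}K_{p_*})<1$, the lemma produces a radius $\hat\delta>0$, which I may assume without loss of generality satisfies $\hat\delta<\delta$, such that $X$ is regular on $B_{\hat\delta}(p_*)$, the Newton iteration mapping $N_X$ is well-defined on $B_{\hat\delta}(p_*)$, $N_X(B_{\hat\delta}(p_*))\subset B_{\hat\delta}(p_*)$, and for every $p\in B_{\hat\delta}(p_*)$ and every $V_p\in\partial X(p)$,
\begin{equation*}
d\!\left(\exp_{p}(-V_{p}^{-1}X(p)),\,p_{*}\right)\leq \frac{\epsilon\lambda_{p_*}K_{p_*}}{1-\epsilon\lambda_{p_*}}\,d(p,p_{*})^{1+\mu}.
\end{equation*}

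Next I would run an induction on $k$. The base case is immediate: $p_0\in B_{\hat\delta}(p_*)\setminus\{p_*\}$ by hypothesis, $X(p_0)\neq 0$ (since $X$ is regular at $p_0$ and any solution of \eqref{eq:TheProblem} in $B_{\hat\delta}(p_*)$ would coincide with $p_*$ by a short contradiction argument using the one-step bound), and any chosen $V_0\in\partial X(p_0)$ is invertible, so \textbf{Step~2} produces $p_1=\exp_{p_0}(-V_0^{-1}X(p_0))\in N_X(p_0)\subset B_{\hat\delta}(p_*)$. For the inductive step, assuming $p_k\in B_{\hat\delta}(p_*)$ is well-defined, the regularity of $X$ on $B_{\hat\delta}(p_*)$ makes the choice of $V_k\in\partial X(p_k)$ and the computation of $p_{k+1}$ admissible, and again $p_{k+1}\in N_X(p_k)\subset B_{\hat\delta}(p_*)$. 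Applying the displayed inequality with $p=p_k$ and $V_p=V_k$ then yields
\begin{equation*}
d(p_{k+1},p_{*})\leq \frac{\epsilon\lambda_{p_*}K_{p_*}}{1-\epsilon\lambda_{p_*}}\,d(p_{k},p_{*})^{1+\mu},
\end{equation*}
which is exactly the claimed rate.

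Finally, convergence $p_k\to p_*$ follows from iterating this recursion: the assumption $\epsilon\lambda_{p_*}(1+\delta^{\mu}K_{p_*})<1$ rearranges to $\tfrac{\epsilon\lambda_{p_*}K_{p_*}}{1-\epsilon\lambda_{p_*}}\,\delta^{\mu}<1$, so by shrinking $\hat\delta$ if necessary one ensures the multiplicative factor $\tfrac{\epsilon\lambda_{p_*}K_{p_*}}{1-\epsilon\lambda_{p_*}}\,d(p_k,p_*)^{\mu}$ is a fixed constant $<1$ throughout the iteration, giving at least linear (and, for $\mu>0$, superlinear of order $1+\mu$) decay. I do not anticipate a serious obstacle, because the heavy lifting—producing $\hat\delta$, controlling $\|V_p^{-1}\|$ uniformly, and establishing the single-step contraction—has been done in Lemma~\ref{le:welldefined}; the only delicate bookkeeping is verifying that the induction never leaves $B_{\hat\delta}(p_*)$, which is precisely the self-map property $N_X(B_{\hat\delta}(p_*))\subset B_{\hat\delta}(p_*)$ supplied by the lemma.
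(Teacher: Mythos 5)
Your proposal is correct and follows essentially the same route as the paper, which itself obtains Theorem~\ref{th:conv} as a direct application of Lemma~\ref{le:welldefined} (citing the original proof in \cite{deOliveiraFerreira2020}): the inductive use of the self-map property $N_X(B_{\hat\delta}(p_*))\subset B_{\hat\delta}(p_*)$ together with the one-step estimate, and the observation that $\epsilon\lambda_{p_*}(1+\delta^{\mu}K_{p_*})<1$ makes the contraction factor less than one, is exactly the intended argument. The only bookkeeping remarks (uniqueness of the singularity in $B_{\hat\delta}(p_*)$ and the degenerate case $X(p_k)=0$) are handled adequately by your contradiction argument.
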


\section{Global Version of the Newton Method } \label{sec:nmca}
In this section, we propose e analyze a global version of the Newton Method (GNM) for finding a singularity of locally Lipschitz continuous vector fields on $\mathbb{M}$, i.e., to solve problem~\eqref{eq:TheProblem}. Basically, the GNM consists of combining the Newton method given by Algorithm~\ref{Alg:NNM}, with the nonmonotone line search technique of \cite{Grippo1986}. In particular, this technique guarantees a nonmonotone decrease of the merit function  $\varphi: \mathbb{M} \rightarrow \mathbb{R}$ defined by
\begin{equation}\label{eq:FM}
\varphi(p) :=\dfrac{1}{2}\|X(p)\|^2,
\end{equation}
with $\|\cdot\|$ denoting the Euclidean norm. To analyze the global convergence of the proposed scheme, we assume throughout this paper that the function $\varphi$ is continuously differentiable, even though $X$ itself is not. Moreover, the gradient de $\varphi$ at $p$  is explicitly computable using any element of the Clarke--generalized covariant derivative of $X$, i.e., $\varphi'(p) = V^TX(p)$ for any $V \in \partial X(p)$. In the following, we formally state the GNM to solve problem~\eqref{eq:TheProblem}.
\\
\hrule
\begin{algorithm}  \label{Alg:DNNM}
{\vspace{0.2cm}\bf Global Version of the Newton Method \vspace{0.3cm}}
\hrule
\vspace{0.1cm}
\begin{description}
\item[\bf Step 0.] Choose parameters $\beta\in (0,1)$ and $\sigma \in (0, 1/2)$. Let $p_0\in \mathbb{M}$ and $M \geq 0$ be given. Set $k=0$ and $m_0 = 0$.
\item[\bf Step 1.] If $X(p_k) = 0$, then \textbf{stop}.
\item[\bf Step 2.] Choose a $V_k := V_{p_k}\in \partial X(p_k)$ and compute $v_k := v_{p_k}\in T_{p_k}\mathbb{M}$ as a solution of the linear equation
\begin{equation} \label{eq:SDNNM}
X(p_k) + V_{k} v = 0.
\end{equation}
If such $v_k$ exists go to \textbf{Step 3}; otherwise, set $v_k = - \varphi'(p_k) = -[V_k^T X(p_k)]$, with $\varphi$ defined by \eqref{eq:FM}.
\item[\bf Step 3.] If $v_k = 0$, then \textbf{stop}. Otherwise, set $\alpha = 1$ and do $\alpha = \beta \alpha$, while
\begin{equation}\label{ine:armijo}
\varphi(\exp_{p_k}\left(\alpha v_k)\right) > \max_{0\leq j \leq m_k}\{\varphi(p_{k-j})\} + \sigma \alpha \varphi'(p_k)^Tv_k,
\end{equation}
with the nonmonotone index function $m_k \leq \min\{m_{k-1} + 1, ~ M\}$ for all $k \geq 1$.
\item[\bf Step 4.] Set $\alpha_k = \alpha$, update $p_{k+1}:= \exp_{p_k}(\alpha v_k)$, $k \leftarrow k+1$, and go to \textbf{Step~1}.
\end{description}
\hrule
\end{algorithm}
\vspace{0.3cm}

\begin{remark}
Notably, to guarantee the well-definedness of a sequence generated by Algorithm~\ref{Alg:DNNM}, we should to check in each iteration $k$ three issues: $(i)$ the Clarke--generalized covariant derivative $\partial X(p_k)$ must be nonempty, see \cite[Proposition 3.1]{deOliveiraFerreira2020}; $(ii)$ all element $V_k \in \partial X(p_k)$ must be non-singular, see Lemma~\ref{le:NonSing}; and $(iii)$ the search direction $v_k\in T_{p_k}\mathbb{M}$ obtained in Setp 2 must be a descent direction for $\varphi$ at $p_k$. The last condition is discussed in the following. Finally, we remark that if $X$ is continuously differentiable and $M = 0$ our method is equivalent to the method proposed in \cite{FernandesFerreiraYuan2020}.
\end{remark}

In the following, we present an useful result for establishing the well-definedness of a sequence generated by Algorithm~\ref{Alg:DNNM}. Its proof is similar to \cite[Lemma 3]{FernandesFerreiraYuan2020} and, we decided to present the proof here for the sake of completeness.

\begin{lemma}\label{le:Welldefined1}
Suppose that $p \in \mathbb{M}$ is such that $X(p) \neq 0$ and $V \in \partial X(p)$. Assume that $v = -V^TX(p)$ or that  is a solution of the following linear equation
\begin{equation}\label{eq:ELNM}
X(p) + Vv = 0.
\end{equation}
If $v \neq 0$, then $v$ is a descent direction for $\varphi$ at $p$.
\end{lemma}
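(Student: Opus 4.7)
The plan is to verify the descent condition $\varphi'(p)^{T} v < 0$ directly in each of the two cases described by the hypothesis, making essential use of the identity $\varphi'(p) = V^{T} X(p)$ recorded in the paragraph preceding Algorithm~\ref{Alg:DNNM}.

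First, I would recall that by the stated formula for the Riemannian gradient of the merit function, we have $\varphi'(p) = V^{T} X(p)$ for the chosen $V \in \partial X(p)$. Since a descent direction is characterized by $\varphi'(p)^{T} v < 0$, the proof reduces to computing this inner product in each of the two cases and checking strict negativity.

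For the case where $v$ solves the linear equation \eqref{eq:ELNM}, I would substitute $Vv = -X(p)$ into
\[
\varphi'(p)^{T} v \;=\; \bigl(V^{T} X(p)\bigr)^{T} v \;=\; X(p)^{T} (Vv) \;=\; -\|X(p)\|^{2}.
\]
Since $X(p) \neq 0$ by hypothesis, this quantity is strictly negative. For the case $v = -V^{T} X(p) = -\varphi'(p)$, I would compute
\[
\varphi'(p)^{T} v \;=\; -\|V^{T} X(p)\|^{2} \;=\; -\|v\|^{2},
\]
which is strictly negative because we assumed $v \neq 0$. Combining the two cases yields the claim.

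There is no real obstacle here: the result is essentially a one‑line algebraic verification in each case, and the only subtlety is to invoke the correct expression for $\varphi'(p)$ in terms of $V$ and $X(p)$ and to use the hypotheses $X(p) \neq 0$ and $v \neq 0$ to upgrade non‑positivity to strict negativity.
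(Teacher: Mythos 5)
Your proposal is correct and follows essentially the same argument as the paper: in both cases it reduces the claim to computing $\varphi'(p)^{T}v$ via $\varphi'(p)=V^{T}X(p)$, obtaining $-\|X(p)\|^{2}$ when $v$ solves $X(p)+Vv=0$ and $-\|V^{T}X(p)\|^{2}$ when $v=-V^{T}X(p)$. If anything, your use of $v\neq 0$ to justify strict negativity in the gradient-step case is slightly more explicit than the paper's wording.
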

\begin{proof}
Firstly, assume that $v = -V^TX(p)$. Since $\varphi'(p) = V^TX(p)$, we have $\varphi'(p)^{T}v = -\|V^T X(p)\|^2 < 0$. Now, suppose that $v$ is a solution of \eqref{eq:ELNM}, i.e., $v = - V^{-1}X(p)$. Again using the fact that $\varphi' (p) = V^{T} X(p)$, we obtain that $\varphi'(p)^Tv = X(p)^{T}Vv$. By the property of the norm, we have $\varphi'(p)^Tv = -\|X(p)\|^2 < 0$, since $X(p)\neq 0$. Therefore, for both choices, we conclude that $v$ is a descent direction for $\varphi$ at $p$.
\end{proof}

\subsection{Global Convergence Analysis}
In this section, we shall present and prove a result on the global convergence of the GNM. We show that under natural assumptions, this method is well defined and preserves the fast convergence rates of the Newton method described by Algorithm~\ref{Alg:NNM}. We begin by showing that the GNM is well defined, i.e., the Setp 3 in Algorithm~\ref{Alg:DNNM} is satisfied in a finite number of backtrackings.

\begin{lemma}
Let $\{p_k\}$ be a sequence generated using Algorithm~\ref{Alg:DNNM}. Then $\{p_k\}$ is well defined.
\end{lemma}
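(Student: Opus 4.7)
The plan is straightforward because the only substantive issue is the termination of the backtracking loop in Step~3; the other well-definedness requirements have already been prepared by earlier results. I would argue by induction on $k$, showing that each iteration either stops the algorithm or produces a well-defined $p_{k+1}$.

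At iteration $k$, if $X(p_k)=0$ the algorithm stops at Step~1, so assume $X(p_k)\neq 0$. By item (i) in the remark following Definition~\ref{def:general}, $\partial X(p_k)$ is nonempty, so a $V_k\in\partial X(p_k)$ can be chosen at Step~2. Step~2 then produces a vector $v_k\in T_{p_k}\mathbb{M}$: either as a solution of~\eqref{eq:SDNNM} when such a solution exists, or otherwise as $v_k=-\varphi'(p_k)$. If $v_k=0$ the algorithm halts at Step~3; if $v_k\neq 0$, Lemma~\ref{le:Welldefined1} guarantees that $v_k$ is a descent direction for $\varphi$ at $p_k$, that is, $\varphi'(p_k)^Tv_k<0$.

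The crux of the proof is to show that the Armijo-type test~\eqref{ine:armijo} stops being violated after finitely many reductions $\alpha\mapsto\beta\alpha$. Since $\varphi\colon\mathbb{M}\to\mathbb{R}$ is continuously differentiable, the real-valued function $\phi_k(\alpha):=\varphi(\exp_{p_k}(\alpha v_k))$ is differentiable at $\alpha=0$ with $\phi_k'(0)=\varphi'(p_k)^Tv_k<0$, so a first-order Taylor expansion along the geodesic $\alpha\mapsto\exp_{p_k}(\alpha v_k)$ yields
$$
\phi_k(\alpha)=\varphi(p_k)+\alpha\,\varphi'(p_k)^Tv_k+o(\alpha),\qquad \alpha\downarrow 0.
$$
Because $\sigma\in(0,1/2)\subset(0,1)$ and $\varphi'(p_k)^Tv_k<0$, there exists $\bar\alpha>0$ such that for every $\alpha\in(0,\bar\alpha]$,
$$
\varphi(\exp_{p_k}(\alpha v_k))\leq \varphi(p_k)+\sigma\alpha\,\varphi'(p_k)^Tv_k\leq \max_{0\leq j\leq m_k}\{\varphi(p_{k-j})\}+\sigma\alpha\,\varphi'(p_k)^Tv_k,
$$
where the second inequality uses $\varphi(p_k)\leq\max_{0\leq j\leq m_k}\varphi(p_{k-j})$. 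Since the trial step sizes form the geometric sequence $1,\beta,\beta^2,\ldots$ converging to $0$, there is a finite index $j$ with $\beta^j\leq\bar\alpha$, and for that $\alpha$ the condition~\eqref{ine:armijo} fails, so the loop exits and $\alpha_k$ is well defined.

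The only delicate point is the first-order expansion of $\phi_k$ at $\alpha=0$, which relies on the chain rule applied to the smooth geodesic and the $C^1$ hypothesis on $\varphi$; everything else follows at once from Lemma~\ref{le:Welldefined1} and the remark recalled above. Completing the induction step, $p_{k+1}=\exp_{p_k}(\alpha_k v_k)$ is well defined, and the sequence $\{p_k\}$ produced by Algorithm~\ref{Alg:DNNM} is well defined.
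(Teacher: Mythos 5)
Your proposal is correct and follows essentially the same route as the paper's proof: invoke Lemma~\ref{le:Welldefined1} to get $\varphi'(p_k)^Tv_k<0$, use the continuous differentiability of $\varphi$ along $\alpha\mapsto\exp_{p_k}(\alpha v_k)$ together with $\sigma<1$ to show the Armijo-type test \eqref{ine:armijo} is satisfied for all sufficiently small step sizes (noting $\varphi(p_k)\leq\max_{0\leq j\leq m_k}\varphi(p_{k-j})$), so the backtracking terminates finitely, and conclude by induction. Your write-up is, if anything, slightly more careful than the paper's, which spells out only the case $k=0$.
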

\begin{proof}
Let $p_0 \in \mathbb{M}$ and suppose that $X(p_0)\neq 0$. According to Lemma~\ref{le:Welldefined1}, we obtain that $v_0 = - V^{-1}_{0}X(p_0)$ or $v_0 = V_0^TX(p_0)$ are such that $\varphi'(p_0)^Tv_0 < 0$. Since $\varphi: \mathbb{M} \rightarrow \mathbb{R}$ is a continuously differentiable function and $\sigma \in (0,1/2)$, we have
$$
\lim_{t \downarrow 0}\dfrac{\varphi(\exp_{p_0}(tv_0)) - \varphi(p_0)}{t} = \varphi'(p_0)^Tv_0 \leq \sigma \varphi'(p_0)^Tv_0  < 0.
$$
Therefore, it is straightforward to show that there exists $\delta \in (0,1]$ such that
$$
\varphi(\exp_{p_0}(tv_0)) < \varphi(p_0) + \sigma t \varphi'(p_0)v_0 = \varphi(p_{l(0)})+ \sigma t \varphi'(p_0)v_0, \qquad  t \in (0,\delta)
$$
The last inequality implies that $\alpha_0$ is well defined. Hence, $p_1$ generated using Algorithm~\ref{Alg:DNNM} is well defined. Using an induction argument, we can prove that $\{p_k\}$ is well define and the proof of lemma is concluded.
\end{proof}

In the next theorem, we will also show that all limit points of the sequence generated by the Algorithm~\ref{Alg:DNNM} are singularities for the vector field $X$. To this end, we assume that the sequence generated using Algorithm~\ref{Alg:DNNM} is infinite, and that $v_k \neq 0$ and $X(p_k)\neq0$ for all $k = 0, 1, \ldots$. Otherwise, if $\{p_k\}$ is finite, then the last iterate is a solution of problem~\eqref{eq:TheProblem} or a stationary point of the merit function $\varphi$ defined in \eqref{eq:FM}.

\begin{theorem}\label{th:mainss}
Let  $X$ be a locally Lipschitz continuous vector field on $\mathbb{M}$. Assume that $X$ is regular at ${p_*} \in \mathbb{M}$, the level set $\Omega_0 := \{p \in \mathbb{M}:~\varphi(p)\leq \varphi(p_0)\}$ is bounded, and $\{p_k\}$ generated using Algorithm~\ref{Alg:DNNM} has a accumulation point ${p_*}$. If $\{v_k\}$ is bounded, then $p_*$ is a singularity of $X$.
\end{theorem}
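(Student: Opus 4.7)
The plan is to establish boundedness of $\{p_k\}$, exploit the nonmonotone Armijo condition via the classical Grippo--Lampariello--Lucidi technique, and then derive a contradiction by case analysis on the step sizes $\alpha_k$.

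First I would verify that all iterates remain in the level set $\Omega_0$. Since $m_0 = 0$ and the line-search condition \eqref{ine:armijo} yields $\varphi(p_{k+1}) < \varphi(p_{\ell(k)})$, where $\ell(k) \in \{k - m_k, \ldots, k\}$ attains the maximum $\max_{0\le j \le m_k} \varphi(p_{k-j})$, a routine induction gives $\varphi(p_k) \le \varphi(p_0)$ for every $k$. Hence $\{p_k\} \subset \Omega_0$ is bounded, and by completeness of $\mathbb{M}$ it lies in a compact set. Following the standard nonmonotone analysis, $\{\varphi(p_{\ell(k)})\}$ is nonincreasing and so converges to some $\varphi^* \ge 0$. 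Applying \eqref{ine:armijo} at iteration $\ell(k) - 1$ and letting $k \to \infty$ gives
\begin{equation*}
\lim_{k \to \infty} \alpha_{\ell(k)-1} \, \varphi'(p_{\ell(k)-1})^T v_{\ell(k)-1} = 0.
\end{equation*}
A finite induction through the $M+1$ predecessors of $\ell(k)$, using the boundedness of $\{v_k\}$ and the $C^1$ regularity of $\varphi$ (the Grippo propagation argument; see \cite{Grippo1986}), extends this to
\begin{equation*}
\lim_{k \to \infty} d(p_{k+1}, p_k) = 0 \quad \text{and} \quad \lim_{k \to \infty} \alpha_k \, \varphi'(p_k)^T v_k = 0.
\end{equation*}

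Next I would assume, for contradiction, that $X(p_*) \neq 0$. Regularity at $p_*$ together with Lemma~\ref{le:NonSing} supplies a neighborhood $B$ of $p_*$ on which every $V_p \in \partial X(p)$ is non-singular with $\|V_p^{-1}\|$ uniformly bounded; local boundedness of $\partial X$ further bounds $\|V_p\|$ on $B$. For the subsequence $p_{k_i} \to p_*$ with $i$ large, the linear equation \eqref{eq:SDNNM} is therefore solvable, so $v_{k_i} = -V_{k_i}^{-1} X(p_{k_i})$ and Lemma~\ref{le:Welldefined1} gives $\varphi'(p_{k_i})^T v_{k_i} = -\|X(p_{k_i})\|^2$. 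Continuity of $X$ then yields $\varphi'(p_{k_i})^T v_{k_i} \to -\|X(p_*)\|^2 < 0$. (The gradient fallback branch is handled identically, using non-singularity of any cluster point $V_* \in \partial X(p_*)$ of $\{V_{k_i}\}$ to conclude $\varphi'(p_*) \neq 0$.)

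Finally I would split the subsequence into two cases. If $\liminf_i \alpha_{k_i} > 0$, then $\alpha_{k_i} \varphi'(p_{k_i})^T v_{k_i}$ stays bounded away from zero, directly contradicting the Grippo limit. Otherwise $\alpha_{k_i} \to 0$ along a further subsequence; using boundedness of $\{v_{k_i}\}$ I may also extract $v_{k_i} \to v_* \in T_{p_*}\mathbb{M}$. For $i$ large the backtracking rejected the previous trial $\alpha_{k_i}/\beta$, so
\begin{equation*}
\varphi\bigl(\exp_{p_{k_i}}((\alpha_{k_i}/\beta) v_{k_i})\bigr) > \varphi(p_{k_i}) + \sigma (\alpha_{k_i}/\beta) \, \varphi'(p_{k_i})^T v_{k_i}.
\end{equation*}
Dividing by $\alpha_{k_i}/\beta$, applying a mean value theorem along the geodesic $t \mapsto \exp_{p_{k_i}}(tv_{k_i})$ together with the $C^1$ regularity of $\varphi$, and passing to the limit yields $\varphi'(p_*)^T v_* \ge \sigma \varphi'(p_*)^T v_*$, hence $\varphi'(p_*)^T v_* \ge 0$ since $\sigma < 1$. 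This contradicts the negative limit from the previous paragraph, forcing $X(p_*) = 0$. The main obstacle is the Grippo propagation step that converts convergence along the reference indices $\ell(k)$ into a limit valid for the entire sequence; once that is in place, everything reduces to clean continuity arguments from the $C^1$ smoothness of $\varphi$ and the regularity of $X$ at $p_*$.
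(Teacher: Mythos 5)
Your proposal is correct and follows essentially the same route as the paper's proof: the Grippo--Lampariello--Lucidi nonmonotone analysis giving $\lim_k \alpha_k\varphi'(p_k)^Tv_k=0$, regularity at $p_*$ via Lemma~\ref{le:NonSing} to identify $v_k=-V_k^{-1}X(p_k)$ near $p_*$, and the same case split on the step sizes with the mean value theorem applied to $\varphi\circ\exp_{p_k}$ in the case $\alpha_k\to 0$. The only difference is cosmetic: you organize it as a contradiction argument assuming $X(p_*)\neq 0$, whereas the paper derives $\varphi'(p_*)^Tv_*=0$ directly and then concludes $X(p_*)=0$ from $\varphi'(p_k)^Tv_k=-\|X(p_k)\|^2$.
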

\begin{proof}
We show that $p_* \in \mathbb{M}$ is such that $X(p_*) = 0$ whenever $\{v_k\}$ is bounded, by adapting the proof present in \cite[Theorem 1]{Grippo1986}. Without loss of generality, we assume that $\varphi'(p_k) \neq0$ and $X(p_k)\neq 0$ for all $k = 0,1, \ldots$. Owing to $X$ be regular at $p_*$ by using Lemma~\ref{le:NonSing} for every $\epsilon > 0$ satisfying $\epsilon \lambda_{p_*} < 1$, there exists $0 < \delta < r_{p_*}$ such that $V_k \in \partial X(p_k)$ is non-singular for all $p_k \in B_\delta(p_*)$. Hence, \eqref{eq:SDNNM} has a solution and $v_k = -V_k^{-1}X(p_{k})$ for $k = 0,1, \ldots$. Let $l(k)$ be an integer such that $k - m_k \leq l(k) \leq k$ and
\begin{equation}\label{eq:FIM}
\varphi(p_{l(k)}) := \max_{0\leq j\leq m_k}\varphi(p_{k-j}).
\end{equation}
Since $m_{k+1}\leq m_k + 1$, it follows that $\{\varphi(p_{l(k)})\}$ is monotonically nonincreasing, and from the boundless of $\Omega_0$, we ensure that $\{\varphi(p_{l(k)})\}$ has a limit as $k$ goes to infinity. From \eqref{ine:armijo} and \eqref{eq:FIM}, for $k > M$, we have
\begin{equation}\label{eq:FIC}
\varphi(p_{l(k)}) \leq \varphi(p_{l(k) - 1}) + \sigma \alpha_{l(k) - 1}\varphi'(p_{l(k) - 1})^Tv_{l(k) - 1}.
\end{equation}
Because $\alpha_{l(k) - 1} > 0$ and $\varphi'(p_{l(k) - 1})^Tv_{{l(k) - 1}} < 0$, by taking limits  as $k$ goes to infinity, in \eqref{eq:FIC}, it follows that $\lim_{k\rightarrow \infty}[\alpha_{l(k) - 1}\varphi'(p_{l(k) - 1})^Tv_{l(k) - 1}] = 0$ and following the idea in the proof of \cite[Theorem 1 (a)]{Grippo1986}, we can write $\lim_{k\rightarrow \infty}[\alpha_{k}\varphi'(p_k)^Tv_k] = 0$. Now, let $p_*$ be an accumulation point of $\{p_k\}$, and relabel $\{p_k\}$ a subsequence converging to $p_*$. Hence, there exists a subsequence of indices $K\subset \mathbb{N}$ such that
\begin{equation}\label{eq:LIM}
\lim_{k \in K}[\alpha_{k}\varphi'(p_{k})^Tv_{k}] = 0.
\end{equation}
We have two possible cases to consider: $\limsup_{k \in K} \alpha_{k} > 0$ or $\lim_{k \in K} \alpha_{k} = 0$. In the first case, passing onto a further subsequence if necessary, we can assume from \eqref{eq:LIM} that $\lim_{k \in K_1}\varphi'(p_{k})^Tv_{k} = 0$ where $K_1 \subset K$. Because $\{v_k\}$  is bounded, there exists a subsequence of indices $K_2\subset K_1$ such that $\lim_{k \in K_2}v_{k} = v_* \neq 0$. Moreover, using that $\lim_{k \in K_2}p_{k} = p_*$ and that $\varphi$ is continually differentiable, we obtain that $\varphi'(p_*)^Tv_* = 0$. In addition, since $v_{k} = -V_{k}^{-1}X(p_{k})$ and $\varphi'(p_{k})^Tv_{k} = - \|X(p_{k})\|^2$, we conclude that $X(p_*) = 0$ what means $p_*$ is a singularity of $X$. Now, we assume that second case holds, i.e., $\lim_{k \in K} \alpha_{k} = 0$. Let $\alpha_{k}$ be chosen in the Step 3 of Algorithm~\ref{Alg:DNNM} such that $\alpha_{k} = \bar{\alpha}_{k}/2$, where $\bar{\alpha}_{k}$ was the last step that satisfy \eqref{ine:armijo}, i.e.,
\begin{equation}\label{eq:Non}
\varphi(\exp_{p_{k}}(\bar{\alpha}_{k}v_{k})) > \max_{0\leq j\leq m_k}\{\varphi(p_{k - j})\} + \sigma \bar{\alpha}_{k}\varphi'(p_{k})^Tv_{k} \geq \varphi(p_{k}) + \sigma \bar{\alpha}_{k} \varphi'(p_{k})^Tv_{k}, \qquad k \in K.
\end{equation}
Using equality~\eqref{eq:DefGra}, the fact that $\exp_p(0) = p$ for $p\in \mathbb{M}$, and the mean value theorem applied to the smooth function $\varphi \circ \exp_{p_k}: T_{p_k}\mathbb{M} \rightarrow \mathbb{R}$, there exists $t_{k}\in [0, \bar{\alpha}_k]$ such that
\begin{equation}\label{eq:TheVM}
\dfrac{\varphi(\exp_{p_{k}}(\bar{\alpha}_{k}v_{k})) - \varphi(p_{k})}{\bar{\alpha}_{k}} = D(\varphi \circ \exp_{p_k})(t_kv_k)[v_k] =  \varphi'(\exp_{p_k}(t_kv_k))^T[D\exp_{p_k}(t_kv_k)[v_k]],
\end{equation}
for $k \in K$. Combining \eqref{eq:Non} and \eqref{eq:TheVM}, we obtain that $\varphi'(\exp_{p_k}(t_kv_k))^T[D\exp_{p_k}(t_kv_k)[v_k]] > \sigma \varphi'(p_k)^Tv_k$. Since $\{v_k\}$ is bounded, there exists a subsequence of indices $K_1 \subset K$ such that $\lim_{k\in K_1}v_k = v_* \neq 0$, and thus $\lim_{k\in K_1}\{t_kv_k\} = 0$. Using that the exponential mapping is smooth and $\lim_{k\in K_1}p_{k} = p_*$, we have $\lim_{k \in K_1}\{\exp_{p_k}(t_kv_k)\} = \exp_{p_*}(0) = p_*$ and $\lim_{k\in K_1}D\exp_{p_k}(t_kv_k)[v_k] = D\exp_{p_*}(0)[v_*] = v_*$. Hence, owing to $\varphi$ be continuous differentiable taking limit on the last inequality, we get
$$
\lim_{k\in K_1}\varphi'(\exp_{p_k}(t_kv_k))^T[D\exp_{p_k}(t_kv_k)[v_k]] = \varphi'(p_*)^T v_* \geq \sigma \varphi'(p_*)^Tv_*.
$$
This implies that $(1- \sigma)\varphi'(p_*)^Tv_* \geq 0$, which may only holds when $\varphi'(p_*)^Tv_* \geq 0$ since $1 - \sigma > 0$. On the other hand, since $\varphi'(p_{k})^Tv_{k} < 0$, by taking limit, we conclude that $\varphi'(p_*)^Tv_* \leq 0$, which combined with  $\varphi'(p_*)^Tv_* \geq 0$, yields  $\varphi'(p_*)^Tv_* = 0$. So, since $\varphi'(p_*)^Tv_* = - \|X(p_*)\|^2$ and $v_*$ is a descent direction for $\varphi$ at $p_*$, we conclude that $X(p_*) = 0$, and proof theorem is complete.
\end{proof}

Now, we prove a global convergence theorem for the GNM. In particular, we show that after a finite number of iteration, our method reduces to the NM described by Algorithm~\ref{Alg:NNM}. Consequently, under natural assumptions the fast local convergence for the GNM is preserved.
\begin{theorem}
Let $X$ be a locally Lipschitz continuous vector field on $\mathbb{M}$. Assume that $X$ satisfies \textbf{A1} with ${\bar{p} = p_*}$, the level set $\Omega_0 := \{p \in \mathbb{M}:~\varphi(p)\leq \varphi(p_0)\}$ is bounded and $\{p_k\}$ generated using Algorithm~\ref{Alg:DNNM} has a accumulation point ${p_*}$. Then, $p_*$ is a singularity of $X$ and $\{p_k\}$  generated by Algorithm~\ref{Alg:DNNM} with $\sigma \in (0, 1/2)$ converges to $p_{*}$ with order $1 + \mu$.
\end{theorem}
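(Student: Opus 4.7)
The plan is to establish the two conclusions in sequence. First, show that the accumulation point $p_*$ is a singularity of $X$ by reducing to Theorem~\ref{th:mainss}; this requires only boundedness of the Newton directions $v_k$ along the subsequence converging to $p_*$. Second, show that once $p_k$ is sufficiently close to $p_*$, the full step $\alpha_k = 1$ is always accepted in the line search, so that Algorithm~\ref{Alg:DNNM} reduces to Algorithm~\ref{Alg:NNM}; then Theorem~\ref{th:conv} immediately yields $(1+\mu)$-order convergence of the tail and, in particular, convergence of the whole sequence to $p_*$.

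For the first part, note that \textbf{A1} at $\bar p = p_*$ implies $X$ is regular at $p_*$, so Lemma~\ref{le:NonSing} provides a ball $B_\delta(p_*)$ on which every $V \in \partial X(p)$ is non-singular with $\|V^{-1}\| \leq \lambda_{p_*}/(1-\epsilon\lambda_{p_*})$. Along a subsequence $\{p_k\}_{k\in K}$ converging to $p_*$, eventually $p_k \in B_\delta(p_*)$, so Step~2 of Algorithm~\ref{Alg:DNNM} produces $v_k = -V_k^{-1}X(p_k)$, and $\|v_k\| \leq \lambda_{p_*}/(1-\epsilon\lambda_{p_*}) \cdot \|X(p_k)\|$ is bounded by continuity of $X$. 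The subsequence argument inside Theorem~\ref{th:mainss} uses boundedness of $\{v_k\}$ only on the convergent subsequence, hence it applies and gives $X(p_*) = 0$.

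For the unit-step acceptance I would expand each quantity in the Armijo test near $p_*$. Using \textbf{A1} with $X(p_*) = 0$ yields $\|X(p_k) + V_k \exp_{p_k}^{-1} p_*\| \leq \epsilon\, d(p_k, p_*)^{1+\mu}$, which rewrites as $\|v_k + \exp_{p_k}^{-1} p_*\| \leq \|V_k^{-1}\|\cdot \epsilon\, d(p_k,p_*)^{1+\mu}$, forcing $\|v_k\|$ and (through $X(p_k) = -V_k v_k$) $\|X(p_k)\|$ to both be of order $d(p_k, p_*)$, with matching lower bounds once $p_k$ is close enough. Meanwhile Lemma~\ref{le:welldefined} gives $d(\exp_{p_k}(v_k), p_*) = O(d(p_k,p_*)^{1+\mu})$, so the local Lipschitz property of $X$ provides $\|X(\exp_{p_k}(v_k))\| = O(d(p_k,p_*)^{1+\mu})$. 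Since $\varphi'(p_k)^T v_k = -\|X(p_k)\|^2$, one gets $\varphi(\exp_{p_k}(v_k)) - \varphi(p_k) - \sigma \varphi'(p_k)^T v_k = -(1-2\sigma)\tfrac12\|X(p_k)\|^2 + O(d(p_k,p_*)^{2(1+\mu)}) \leq 0$ once $p_k$ is close enough to $p_*$, using $\sigma < 1/2$ and the fact that the remainder is strictly higher order relative to $\|X(p_k)\|^2 = \Theta(d(p_k,p_*)^2)$. Because $\max_{0\leq j\leq m_k}\varphi(p_{k-j}) \geq \varphi(p_k)$, the test at $\alpha = 1$ is satisfied and Step~3 terminates immediately with $\alpha_k = 1$.

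Combining the two steps, I would fix $\hat\delta>0$ small enough that both Lemma~\ref{le:welldefined} (invariance of $B_{\hat\delta}(p_*)$ under the Newton map with $(1+\mu)$-order contraction) and the unit-step criterion above hold on $B_{\hat\delta}(p_*)$. Picking $k_0 \in K$ with $p_{k_0} \in B_{\hat\delta}(p_*)$, an induction shows $\alpha_k = 1$ and $p_{k+1} = \exp_{p_k}(-V_k^{-1}X(p_k)) \in B_{\hat\delta}(p_*)$ for all $k \geq k_0$. From $p_{k_0}$ onward the GNM iterates coincide with those of Algorithm~\ref{Alg:NNM}, so Theorem~\ref{th:conv} delivers $d(p_{k+1}, p_*) \leq \epsilon\lambda_{p_*}K_{p_*}/(1-\epsilon\lambda_{p_*})\cdot d(p_k, p_*)^{1+\mu}$, giving convergence of the whole sequence to $p_*$ with order $1+\mu$. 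The principal difficulty I anticipate is the unit-step acceptance: pinning down the $\Theta(d(p_k,p_*))$ lower bound on $\|X(p_k)\|$ cleanly enough that the Newton-step remainder can be dominated — delicate for $\mu = 0$, where the higher-order cushion degenerates and one must lean on the smallness already built into the bound $\epsilon\lambda_{p_*}(1+\delta^\mu K_{p_*}) < 1$.
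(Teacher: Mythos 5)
Your proposal is correct and follows essentially the same route as the paper: boundedness of $\{v_k\}$ via Lemma~\ref{le:NonSing} (and a bound on $\|X(p_k)\|$) feeding into Theorem~\ref{th:mainss}, then eventual acceptance of the unit step so that the iteration reduces to Algorithm~\ref{Alg:NNM} and Theorem~\ref{th:conv} gives the $(1+\mu)$-rate. The $\mu=0$ difficulty you flag is handled in the paper exactly along the lines you anticipate: it applies the estimates of \textbf{A1} and Lemma~\ref{le:welldefined} with $\mu=0$ to get $\|X(\tilde p_{k_0})\|\le\bar\epsilon\,\|X(p_{k_0})\|$ and then takes $\epsilon$ small enough that $\bar\epsilon\le\sqrt{1-2\sigma}$, which makes the Armijo test at $\alpha=1$ hold.
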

\begin{proof}
Since $X$ satisfies \textbf{A1} with ${\bar{p} = p_*}$, we can take $\lambda_{p_*}>0$ such that $\lambda_{p_*} \geq \max\{\|V_{p_*}^{-1}\|: ~ V_{p_*}\in \partial X(p_*)\}$. So, it follows from Lemma~\ref{le:NonSing} that for every $\epsilon > 0$, $0 \leq \mu \leq 1$ and $0 < \delta < r_{p_*}$  satisfying $\epsilon \lambda_{p_*}(1+\delta^\mu K_{p_*}) < 1$, we have
\begin{equation}\label{eq:LIM1}
\left\|V_{k}^{-1}\right\|\leq \dfrac{\lambda_{p_*}}{1 - \epsilon\lambda_{p_*}}, \qquad \forall ~ p_k \in B_{\delta}(p_*), \quad \forall ~V_{k}\in \partial X(p_k).
\end{equation}
On the other hand, using that the parallel transport is an isometry and some algebraic manipulations, we obtain that $\|X(p_k)\| \leq \|X(p_*) - P_{p_kp_*}[X(p_k) + V_{k}\exp_{p_k}^{-1}p_*]\| + \|X(p_*)\| + \|V_{k}\|\|\exp_{p_k}^{-1}p_*\|$. Now, since $X$ satisfies \eqref{eq:scA1}, all element $V_{k} \in \partial X(p_k)$ is such that $\|V_{k}\| \leq L$, where $L>0$ representees the Lipschitz constant of $X$ around $p_*$, see \cite[Proposition 3.1 (ii)]{deOliveiraFerreira2020}. Moreover, using that $\|\exp_{p_k}^{-1}p_*\|= d(p_k, p_*) < \delta$ the last inequality reduces to
\begin{equation}\label{eq:LIM2}
\big\|X(p_k)\big\|  <  (\epsilon + L)\delta^\mu + \big\|X(p_*)\big\|.
\end{equation}
This implies that $\{X(p_k)\}$ is bounded. Combining \eqref{eq:SDNNM}, \eqref{eq:LIM1} and \eqref{eq:LIM2}, we have $\{v_k\}$ is bounded, and according to Theorem~\ref{th:mainss}, we conclude that $X(p_*) = 0$. Now, we turn to the convergence rate. To this end, we proceed to prove that there exists an integer $k_0 > 0$ such that for all $k\geq k_0$, we have $\alpha_k = 1$; hence $p_{k+1} = \exp_{p_k}(v_k)$, where $v_k = -V_k^{-1}X(p_k)$. Indeed, since $V_{p_*}$ is non-singular and $X(p_*) = 0$, Lemma~\ref{le:welldefined} implies that there exists $0<\hat{\delta} < \delta$ such that $V_{k} \in \partial X(p_k)$ is non-singular for all $p_k \in B_{\hat{\delta}}(p_*)$ and $N_X(p_k)\subset B_{\hat{\delta}}(p_*)$. Consequently, $p_{k+1}\in B_{\hat{\delta}}(p_*)$. Since $p_*$ is a cluster point of $\{p_k\}$, there exists $k_0>0$  such that shrinking $\hat{\delta}$ if necessary, we have $p_{k_0} \in B_{\hat{\delta}}(p_*)$. Let $\tilde{p}_{k_0} := \exp_{p_{k_0}}(v_0)$. Considering that $X(p_*) = 0$ and the parallel transport is an isometry,  we obtain by \eqref{eq:scA1} with $\mu = 0$ that
\begin{equation}\label{eq:Eq2}
\left\| X(\tilde{p}_{k_0})\right\| - \left\|V_{k_0 + 1} \exp_{\tilde{p}_{k_0}}^{-1}p_{*}\right\| \leq \epsilon d(\tilde{p}_{k_0}, p_*).
\end{equation}
Since $\tilde{p}_{k_0}\in B_{\hat{\delta}}(p_*)$, we can apply \cite[Proposition 3.1 (ii)]{deOliveiraFerreira2020} to conclude that all element $V_{k_0+1}\in \partial X(\tilde{p}_{k_0})$ is such that $\|V_{k_0+1}\| \leq L$. Moreover, since $\|\exp_{\tilde{p}_{k_0}}^{-1}p_*\| = d(\tilde{p}_{k_0}, p_*)$, inequality~\eqref{eq:Eq2} reduces to $\|X(\tilde{p}_{k_0})\| \leq (\epsilon + L) d(\tilde{p}_{k_0}, p_*)$. Now, using Lemma~\ref{le:welldefined} with $p = p_{k_0}$ and $\mu = 0$, we obtain that
\begin{equation}\label{eq:Eq3}
\left\|X(\tilde{p}_{k_0})\right\| \leq (\epsilon + L) \dfrac{\epsilon \lambda_{p_*}K_{p_*}}{1 - \epsilon \lambda_{p_*}}d(p_{k_0}, p_*).
\end{equation}
Because $\tilde{p}_{k_0} = \exp_{p_{k_0}}(-V_0^{-1}X(p_{k_0}))$ and $\|V_0^{-1}\|\leq \lambda_{p_*}/(1-\epsilon\lambda_{p_*})$ using triangular inequality, the definition of the exponential mapping and Lemma~\ref{le:welldefined}, we have
$$
d(p_{k_0},p_*) \leq \dfrac{\lambda_{p_*}}{1 - \epsilon \lambda_{p_*}}\left\|X(p_{k_0})\right\| + d(\tilde{p}_{k_0}, p_*) \leq \dfrac{\lambda_{p_*}}{1 - \epsilon \lambda_{p_*}}\left\|X(p_{k_0})\right\| + \dfrac{\epsilon \lambda_{p_*} K_{p_*}}{1-\epsilon \lambda_{p_*}}d(p_{k_0}, p_*).
$$
This implies that $d(p_{k_0}, p_*) \leq [\lambda_{p_*}/(1-\epsilon \lambda_{p_*}(1 + K_{p_*}))]\|X(p_{k_0})\|$. Thus, combining this inequality  with \eqref{eq:Eq3} we conclude that $\|X(\tilde{p}_{k_0})\| \bar{\epsilon} \leq \|X(p_{k_0})\|$, where $\bar{\epsilon} = (\epsilon + L)\epsilon \lambda_{p_*}^2K_{p_*}/(1 - \epsilon \lambda_{p_*})[1-\epsilon \lambda_{p_*}(1 + K_{p_*})]$. By \eqref{eq:FM}, since $\varphi'(p_{k_0})^Tv_{0} = -\|X(p_{k_0})\|^2$, we can take $\epsilon > 0$ such that  $\bar{\epsilon} \leq \sqrt{1 - 2\sigma}$ to conclude that
$$
\varphi(\tilde{p}_{k_0}) = \dfrac{1}{2}\left\|X(\tilde{p}_{k_0})\right\|^2 \leq \dfrac{1 - 2\sigma}{2}\left\|X(p_{k_0})\right\|^2  \leq \max_{0\leq j \leq m_{k_0}}\{\varphi(p_{k_0 - j})\} + \sigma \varphi'(p_{k_0})^Tv_{0}.
$$
From \eqref{ine:armijo}, we have $\alpha_{k_0} = 1$; hence $p_{k_0 + 1} = \tilde{p}_{k_0}$. This implies that $p_{k_0 + 1} \in B_{\hat{\delta}}(p_*)$ since $N_X(p_{k_0}) \subset B_{\hat{\delta}}(p_*)$. By induction of the above arguments, we obtain that
\begin{equation}\label{eq:NEARM}
\alpha_k = 1, \quad p_{k+1} \in N_X(p_k) \subset B_{\hat{\delta}}(p_*), \quad \forall ~k\geq k_0.
\end{equation}
Since $\epsilon > 0$ and $0 < \delta < r_{p_*}$ satisfy $\epsilon \lambda_{p_*}(1 + \delta^\mu K_{p_*}) < 1$, we can apply Theorem~\ref{th:conv} to conclude from \eqref{eq:NEARM} that the sequence $\{p_k\}$ generated by Algorithm~\ref{Alg:DNNM} converges with order $1+\mu$ to $p_*$.
\end{proof}

\section{Numerical Experiments}\label{sec:numexp}
This section reports some preliminary numerical experiments obtained by applying the GNM and NM under a class of locally Lipschitz continuous vector fields. Before setting the problem, we begin by presenting some preliminaries on the sphere geometry. Let $\langle \cdot, \cdot\rangle$ be the {\it usual inner product on $\mathbb{R}^{n}$}, with corresponding {\it norm} denoted by $\| \cdot\|$. The {\it $(n-1)$--dimensional Euclidean sphere} and its {\it tangent hyperplane at a point $p$} are denoted, respectively, by
$$
\mathbb{S}^{n-1}:=\left\{ p=(p_1, \ldots, p_{n}) \in \mathbb{R}^{n}: ~ \|p\|= 1\right\}, \qquad
T_{p}\mathbb{S}^{n-1}:=\left\{v\in \mathbb{R}^{n}:~ \langle p, v \rangle=0 \right\}.
$$
Now, we present the problem at stake. The \textit{absolute value vector field} (AVVF) is described as:
$$
\mbox{find} \quad p\in\mathbb{S}^{n-1}\quad \mbox{such that} \quad (I-pp^T)\left[Ap - |p| - b\right] = 0,
$$
where $I$ denote the $n\times n$ identity matrix, $I-pp^T: \mathbb{R}^{n} \to T_p\mathbb{S}^{n-1}$ is the linear mapping denominated by {\it projection onto the tangent hyperplane} $T_p\mathbb{S}^{n-1}$, $A \in \mathbb{R}^{n\times n}$, $b \in \mathbb{R}^n \equiv \mathbb{R}^{n\times 1}$, and $|p|$ denotes the vector whose $i$-th component is equal to $|p_i|$. In our implementation, each AVVF was generated randomly. We used the Matlab routine \textit{sprand} to construct matrix $A$. In particular, this routine generates a sparse matrix with a predefined dimension, density, and singular values. Initially, we defined the dimension $n$ of the problem and the density. Next, we randomly generated the vector of singular values from a uniform distribution on $(0, 1)$. To ensure that the condition $\|A^{-1}\|< 1/3$ is fulfilled and consequently the well-definedness of the method be guaranteed (see \cite[Theorem 2]{BelloCruz2016}), we rescale the vector of singular values. To generate the vector $b$, we selected a random solution $p_* \in \mathbb{S}^{n-1}$ from a uniform distribution on $(-100, 100)$ and computed $b = Ap_* - |p_*|$. In both methods, we choose $p_0 \in \mathbb{S}^{n-1}$ uniform distribution on $(-100, 100)$ as the starting point. We stopped the execution of Algorithm~\ref{Alg:DNNM} at $p_k$, declaring convergence if $\|(I-p_kp_k^T)[Ap_k - |p_k| - b]\| < 10^{-6}$. In case this stopping criterion is not satisfied, the method stops when a maximum of $100$ iterations has been performed. For this class of problems, an element of the Clarke--generalized Jacobian at $p$ (see \cite{deOliveiraFerreira2020,BelloCruz2016,Mangasarian2009}) is given by
$$
V = (I - p p^T)[A -  \mbox{diag}\big(\mbox{sgn}(p)\big)] - p^T[Ap - |p| - b]I, \qquad p \in  \mathbb{S}^{n-1},
$$
where $\mbox{diag}(\alpha)$ denotes a diagonal matrix with diagonal elements $\alpha_1, \alpha_2, \ldots, \alpha_n$, and $\mbox{sgn}(p)$ denotes a vector with components equal to $-1$, $0$, or $1$ depending on whether the corresponding component of the vector $p$ is negative, zero, or positive, respectively. The numerical results were obtained using Matlab version R2016a on a 2.5~GHz Intel\textregistered\ Core\texttrademark\ i5 2450M computer with 6~GB of RAM and Windows 7 ultimate system.

\begin{table}[h!]
\setstretch{1.5}
\centering
\caption{Performance of the GNM and NM}\label{tab1}
\vspace{0.5cm}
\begin{tabular}{c|c|c|c|c|c|c|c|c|c|c|c|c|}
\cline{2-13}
& \multicolumn{3}{c|}{GNM (M = 0)}   &   \multicolumn{3}{c|}{GNM (M = 1)}   &   \multicolumn{3}{c|}{GNM (M = 5)}   &   \multicolumn{3}{c|}{NM}\\ \hline
\multicolumn{1}{|c|}{Dimension} & \% & It & Time      & \% & It &  Time      & \% &  It  &  Time       & \% & It &   Time     \\ \hline
\multicolumn{1}{|c|}{100}       & 99 & 40 & 13.96     & 96 & 29 &  13.58     & 98 &  32  &  17.58      & 95 & 20 &  18.74     \\ \hline
\multicolumn{1}{|c|}{400}       & 97 & 28 & 14.96     & 92 & 25 &  14.53     & 96 &  36  &  20.61      & 86 & 20 &  18.69     \\ \hline
\multicolumn{1}{|c|}{800}       & 97 & 25 & 15.36     & 100& 23 &  14.82     & 96 &  23  &  23.38      & 94 & 24 &  22.20     \\ \hline
\multicolumn{1}{|c|}{1600}      & 98 & 34 & 17.14     & 96 & 31 &  16.91     & 90 &  34  &  23.43      & 95 & 25 &  23.80     \\ \hline
\end{tabular}

\end{table}

Table~\ref{tab1} display the numerical results obtained for AVVFs of dimensions $100$, $400$, $800$, and $1600$. For the numerical tests, we considered three options distinct for the constant $M$, namely $M = 0$, $M = 1$, and $M = 5$. It is worth pointing out that for $M = 0$ the equality~\eqref{ine:armijo} reduces to Armijo condition for all $k = 0,1, \ldots$. The methods were compared on the percentage of problems solved $(\%)$, average number of iterations (It), and average CPU time in seconds (Time). We generated 100 AVVFs of dimensions 100, 400, 800, and 1600. The density of matrix $A$ was set to $0.003$, similar to that in [6]. This implies that only approximately $0.3\%$ of the elements of $A$ are non-null. We executed each test problem for three times and defined the corresponding CPU time as the mean of these measurements to reach a higher accuracy of the CPU time.

Analyzing Table~\ref{tab1}, we can observe that for the set of test problems, the strategy of globalization becomes the NM more robust. For example, for AVVEs of dimension 800, the robustness rate of NM was $94\%$, while that of GNM ($M = 1$) was $100\%$. Regarding the average number of iterations, we observe that in most cases the NM is better than GNM ($M = 0$, $M = 1$ and $M = 5$). However, if we analyze the average time spent in solving the problems, it is possible to note that the behavior of both versions of GNM ($M = 0$ and $M = 1$) is better than of NM.

In summary, these experiments indicate that the constant $M$ interferes with the behavior of GNM. For example, the Armijo condition $(M = 0)$ was shown to be inferior to the nonmonotone technique $(M = 1)$ both in terms of the number of interactions and the average time.

\section{Conclusions}\label{sec:fr}
This paper proposed and analyzed the GNM for finding a singularity of the nonsmooth vector fields. It basically consists of combining our first algorithm, see \cite{deOliveiraFerreira2020}, with a nonmonotone line search technique. Under suitable conditions, global convergence of the algorithm to a stationary point of the chosen merit function was established. Some numerical experiments were carried out in order to illustrate the numerical behavior of the methods. They indicate that the proposed schemes represent a promising alternative for finding a singularity of the nonsmooth vector fields.


\begin{thebibliography}{10}

\bibitem{ArgyrosHilout2009}
I.~K. Argyros and S.~d. Hilout.
\newblock Newton's method for approximating zeros of vector fields on
  {R}iemannian manifolds.
\newblock {\em J. Appl. Math. Comput.}, 29(1-2):417--427, 2009.

\bibitem{Azagra2005}
D.~Azagra, J.~Ferrera, and F.~L\'opez-Mesas.
\newblock Nonsmooth analysis and {H}amilton--{J}acobi equations on {R}iemannian
  manifolds.
\newblock {\em J. Funct. Anal.}, 220(2):304--361, 2005.

\bibitem{BehlingFischerHerrich2014}
R.~Behling, A.~Fischer, M.~Herrich, A.~Iusem, and Y.~Ye.
\newblock A {L}evenberg-{M}arquardt method with approximate projections.
\newblock {\em Comput. Optim. Appl.}, 59(1-2):5--26, 2014.

\bibitem{BelloCruz2016}
J.~Y. Bello~Cruz, O.~P. Ferreira, and L.~F. Prudente.
\newblock On the global convergence of the inexact semi-smooth {N}ewton method
  for absolute value equation.
\newblock {\em Computational Optimization and Applications}, 65(1):93--108, Sep
  2016.

\bibitem{Bertsekas2014}
D.~P. Bertsekas.
\newblock {\em Constrained Optimization and Lagrange Multiplier Methods}.
\newblock Academic Press, New York, 2014.

\bibitem{BittencourtFerreira2015}
T.~Bittencourt and O.~P. Ferreira.
\newblock Local convergence analysis of inexact {N}ewton method with relative
  residual error tolerance under majorant condition in {R}iemannian manifolds.
\newblock {\em Appl. Math. Comput.}, 261:28--38, 2015.

\bibitem{BortoliFernandesFerreiraYuan2020}
M.~A. d.~A. Bortoloti, T.~A. Fernandes, O.~P. Ferreira, and J.~Yuan.
\newblock Damped {N}ewton's method on {R}iemannian manifolds.
\newblock {\em J. Global Optim.}, 77(3):643--660, 2020.

\bibitem{FernandesFerreiraYuan2020}
M.~A. d.~A. Bortoloti, T.~A. Fernandes, O.~P. Ferreira, and J.~Yuan.
\newblock Damped {N}ewton's {M}ethod on {R}iemannian {M}anifolds.
\newblock {\em J. Global Optim.}, 2020.

\bibitem{CanaryEpsteinMarden2006}
R.~D. Canary, D.~Epstein, and A.~Marden.
\newblock {\em Fundamentals of hyperbolic geometry: selected expositions},
  volume 328 of {\em London Mathematical Society Lecture Note Series}.
\newblock Cambridge University Press, Cambridge, 2006.

\bibitem{Clarke1990}
F.~H. Clarke.
\newblock {\em Optimization and nonsmooth analysis}, volume~5 of {\em Classics
  in Applied Mathematics}.
\newblock Society for Industrial and Applied Mathematics (SIAM), Philadelphia,
  PA, second edition, 1990.

\bibitem{CruzNetoLimaOliveira1998}
J.~X. da~Cruz~Neto, L.~L. de~Lima, and P.~R. Oliveira.
\newblock Geodesic algorithms in {R}iemannian geometry.
\newblock {\em Balkan J. Geom. Appl.}, 3(2):89--100, 1998.

\bibitem{deOliveiraFerreira2020}
F.~R. de~Oliveira and O.~P. Ferreira.
\newblock Newton method for finding a singularity of a special class of locally
  lipschitz continuous vector fields on riemannian manifolds.
\newblock {\em J Optim Theory Appl}, 185(2):522--539, 2020.

\bibitem{Dedieu2003}
J.-P. Dedieu, P.~Priouret, and G.~Malajovich.
\newblock Newton's method on {R}iemannian manifolds: convariant alpha theory.
\newblock {\em IMA J. Numer. Anal.}, 23(3):395--419, 2003.

\bibitem{Dennis1996}
J.~E. Dennis, Jr. and R.~B. Schnabel.
\newblock {\em Numerical methods for unconstrained optimization and nonlinear
  equations}, volume~16 of {\em Classics in Applied Mathematics}.
\newblock Society for Industrial and Applied Mathematics (SIAM), Philadelphia,
  PA, 1996.
\newblock Corrected reprint of the 1983 original.

\bibitem{doCarmo1992}
M.~P. do~Carmo.
\newblock {\em Riemannian geometry}.
\newblock Mathematics: Theory \& Applications. Birkh\"auser Boston, Inc.,
  Boston, MA, 1992.
\newblock Translated from the second Portuguese edition by Francis Flaherty.

\bibitem{Fan2013}
J.~Fan.
\newblock On the {L}evenberg-{M}arquardt methods for convex constrained
  nonlinear equations.
\newblock {\em J. Ind. Manag. Optim.}, 9(1):227--241, 2013.

\bibitem{FernandesFerreiraYuan2017}
T.~A. Fernandes, O.~P. Ferreira, and J.~Yuan.
\newblock On the {S}uperlinear {C}onvergence of {N}ewton's {M}ethod on
  {R}iemannian {M}anifolds.
\newblock {\em J. Optim. Theory Appl.}, 173(3):828--843, 2017.

\bibitem{Ferreira2008}
O.~P. Ferreira.
\newblock Dini derivative and a characterization for {L}ipschitz and convex
  functions on {R}iemannian manifolds.
\newblock {\em Nonlinear Anal.}, 68(6):1517--1528, 2008.

\bibitem{OrizonSilva2012}
O.~P. Ferreira and R.~C.~M. Silva.
\newblock Local convergence of {N}ewton's method under a majorant condition in
  {R}iemannian manifolds.
\newblock {\em IMA J. Numer. Anal.}, 32(4):1696--1713, 2012.

\bibitem{FerreiraSvaiter2002}
O.~P. Ferreira and B.~F. Svaiter.
\newblock Kantorovich's theorem on {N}ewton's method in {R}iemannian manifolds.
\newblock {\em J. Complexity}, 18(1):304--329, 2002.

\bibitem{HosseiniEtall2017}
E.~Ghahraei, S.~Hosseini, and M.~R. Pouryayevali.
\newblock Pseudo-{J}acobian and characterization of monotone vector fields on
  {R}iemannian manifolds.
\newblock {\em J. Convex Anal.}, 24(1):149--168, 2017.

\bibitem{Grippo1986}
L.~Grippo, F.~Lampariello, and S.~Lucidi.
\newblock A nonmonotone line search technique for {N}ewton's method.
\newblock {\em SIAM J. Numer. Anal.}, 23(4):707--716, 1986.

\bibitem{GrohsHossein2016}
P.~Grohs and S.~Hosseini.
\newblock Nonsmooth trust region algorithms for locally {L}ipschitz functions
  on {R}iemannian manifolds.
\newblock {\em IMA J. Numer. Anal.}, 36(3):1167--1192, 2016.

\bibitem{HosseiniHuangYousefpour2018}
S.~Hosseini, W.~Huang, and R.~Yousefpour.
\newblock Line search algorithms for locally {L}ipschitz functions on
  {R}iemannian manifolds.
\newblock {\em SIAM J. Optim.}, 28(1):596--619, 2018.

\bibitem{Hosseini2013}
S.~Hosseini and M.~R. Pouryayevali.
\newblock Nonsmooth optimization techniques on {R}iemannian manifolds.
\newblock {\em J. Optim. Theory Appl.}, 158(2):328--342, 2013.

\bibitem{Hosseini2017}
S.~Hosseini and A.~Uschmajew.
\newblock A {R}iemannian {G}radient {S}ampling {A}lgorithm for {N}onsmooth
  {O}ptimization on {M}anifolds.
\newblock {\em SIAM J. Optim.}, 27(1):173--189, 2017.

\bibitem{IannazzoPorcelli2018}
B.~Iannazzo and M.~Porcelli.
\newblock The {R}iemannian {B}arzilai-{B}orwein method with nonmonotone line
  search and the matrix geometric mean computation.
\newblock {\em IMA J. Numer. Anal.}, 38(1):495--517, 2018.

\bibitem{Lang1995}
S.~Lang.
\newblock {\em Differential and {R}iemannian manifolds}, volume 160 of {\em
  Graduate Texts in Mathematics}.
\newblock Springer-Verlag, New York, third edition, 1995.

\bibitem{LedyaevYuZhu2007}
Y.~S. Ledyaev and Q.~J. Zhu.
\newblock Nonsmooth analysis on smooth manifolds.
\newblock {\em Trans. Amer. Math. Soc.}, 359(8):3687--3732, 2007.

\bibitem{ChongWang2005}
C.~Li and J.~Wang.
\newblock Convergence of the newton method and uniqueness of zeros of vector
  fields on riemannian manifolds.
\newblock {\em Sci. China Ser. A Math.}, 48(11):1465--1478, 2005.

\bibitem{Mangasarian2009}
O.~L. Mangasarian.
\newblock A generalized {N}ewton method for absolute value equations.
\newblock {\em Optimization Letters}, 3(1):101--108, Jan 2009.

\bibitem{NoceWrig06}
J.~Nocedal and S.~J. Wright.
\newblock {\em Numerical Optimization}.
\newblock Springer, New York, NY, USA, second edition, 2006.

\bibitem{Rampazzo2007}
F.~Rampazzo and H.~J. Sussmann.
\newblock Commutators of flow maps of nonsmooth vector fields.
\newblock {\em J. Differential Equations}, 232(1):134--175, 2007.

\bibitem{Sakai1996}
T.~Sakai.
\newblock {\em Riemannian geometry}, volume 149 of {\em Translations of
  Mathematical Monographs}.
\newblock American Mathematical Society, Providence, RI, 1996.
\newblock Translated from the 1992 Japanese original by the author.

\bibitem{Loring2011}
L.~W. Tu.
\newblock {\em An introduction to manifolds}.
\newblock Universitext. Springer-Verlag New York, second edition, 2011.

\bibitem{Wang2011}
J.~H. Wang.
\newblock Convergence of {N}ewton's method for sections on {R}iemannian
  manifolds.
\newblock {\em J. Optim. Theory Appl.}, 148(1):125--145, 2011.

\end{thebibliography}
\end{document}